\def\@settitle{\begin{center}%
		\baselineskip14\p@\relax
		\normalfont\LARGE\scshape\bfseries
		\@title
	\end{center}%
}
\def\subsection{\@startsection{subsection}{2}%
	\z@{.5\linespacing\@plus.7\linespacing}{.5\linespacing}%
	{\normalfont\large\bfseries}}
\def\subsubsection{\@startsection{subsubsection}{3}%
	\z@{.5\linespacing\@plus.7\linespacing}{.5\linespacing}%
	{\normalfont\itshape}}
\date{\today}
\authors}
\newtheorem{Thm}{Theorem}[section]
\newtheorem{Prop}[Thm]{Proposition}
\newtheorem{Fact}[Thm]{Fact}
\newtheorem{Lem}[Thm]{Lemma}
\newtheorem{Cor}[Thm]{Corollary}
\theoremstyle{remark}
\newtheorem{Rem}[Thm]{Remark}
\theoremstyle{remark}
\theoremstyle{definition}
\newcommand{\R}{\mathbb{R}}
\newcommand{\Z}{\mathbb{Z}}
\newcommand{\ra}{\rightarrow}
\newcommand{\ind}{\mathds{1}}
\newcommand{\Let}{\coloneqq}
\newcommand{\tr}{^\intercal}
\newcommand{\C}{\mathbb{C}}
\newcommand{\EZ}[1][]{e}
\newcommand{\fEZ}[1][]{e}
\newcommand{\shiftop}{\mathfrak{q}}
\begin{document}
\title{Real-time Fault Estimation for a Class of Discrete-Time Linear Parameter-Varying Systems}


\author{Chris van der Ploeg$^{1,2}$, Emilia Silvas$^{1,2}$, Nathan van de Wouw$^{1,3}$, Peyman Mohajerin Esfahani$^{4}$}
\thanks{1. Department of Mechanical Engineering, Eindhoven University of Technology, The Netherlands}
\thanks{2. The Netherlands Organisation for Applied Scientific Research, Integrated Vehicle Safety Group}
\thanks{3. Department of Civil, Environmental and Geo-Engineering, University of Minnesota, U.S.A}
\thanks{4. Delft Center for Systems and Control, Delft University of Technology, The Netherlands}
\thanks{Peyman Mohajerin Esfahani acknowledges the
support of the ERC grant TRUST-949796.}
\maketitle

\begin{abstract} 
Estimating and detecting faults is crucial in ensuring safe and efficient automated systems. In the presence of disturbances, noise or  varying system dynamics, such estimation is even more challenging. To address this challenge, this article proposes a novel filter to estimate multiple fault signals for a class of discrete-time linear parameter-varying (LPV) systems. The design of such a filter is formulated as an optimization problem and is solved recursively, while the system dynamics may vary over time. Conditions for existence and detectability of the fault are introduced and the problem is formulated and solved using the quadratic programming framework.
{\color{black} We further propose an approximate scheme that can be arbitrarily precise while it enjoys an analytical solution, which supports real-time implementation. 
The method is illustrated and validated on an automated vehicle's lateral dynamics, which is a practically relevant example for LPV systems. The results show that the estimation filter can decouple unknown disturbances and known or measurable parameter variations in the dynamics while estimating the unknown fault.}

\end{abstract}
\section{Introduction}{\color{black}
The problem of fault diagnosis has been an extensively studied topic over the past decades. The detection and estimation of a fault can support an action of the system mitigating the effect of the fault, improving the safety of the system and potential users. In literature, various categories of fault diagnosis methods are elaborated upon,  see~\cite{Ding20081, Gao20153757} and the references therein. 
In the scope of fault {\em detection}, i.e., detecting the presence of a fault and {\em estimation}, i.e., determining the exact magnitude and shape of a fault, choosing between fault-sensitivity, attenuation and decoupling of disturbances and uncertainties is often the most challenging trade-off~\cite{varga2017solving}. The task of {\em isolation} can be seen as a special case of detection and estimation, where all faults can be decoupled from one another using disturbance attenuation techniques, although the complexity of this problem highly depends on the condition of fault isolability~\cite{van2020multiple}.

The class of linear parameter-varying (LPV) systems is often considered in the scope of fault detection and estimation and is particularly suitable for treating non-linear systems with parameter variations as linear systems with time-varying and potentially measurable parameters.  
A class of solutions was defined in literature through the use of linear matrix inequalities (LMI) to robustly formulate the sensitivity problem in an optimization framework using Lyapunov functions~\cite{6082383,Wei2011114,CHEN20178600,Henry2012190,DO20202099,Hamdi2021247,Gomez-Penate2019339}. Therein,  parameter-independent Lyapunov functions~\cite{6082383} are used in a polytopic framework, which due to their time-independent nature could result in conservative solutions~\cite{Wei2011114}. Other works consider the use of parameter-dependent Lyapunov functions for filter synthesis in either a polytopic framework~\cite{CHEN20178600} or in a linear fractional transformation framework~\cite{Henry2012190}. These methods can handle potentially uncertain LPV systems. However, their computational burden is often high and may not guarantee the decoupling of disturbances, unless assumptions are made on the frequency content through the use of a complementary disturbance observer~\cite{DO20202099} (i.e., a proportional integral (PI) observer). In~\cite{Hamdi2021247,Gomez-Penate2019339} a sliding-mode observer is proposed for continuous-time systems, for which the observer gains are synthesised through LMIs. Although these methods are well suited for parameter-varying systems, they could suffer from chattering or singularities, requiring a relaxation of the proposed solution.

A different solution to the LPV fault estimation problem is the use of a geometric approach. By exploiting the known model, disturbance directions that are not of interest can be projected in parameter-varying unobservable subspaces of the fault estimator~\cite{BOKOR2004511}. {\color{black}A nullspace approach, an application of the geometric approach, is proposed in~\cite{2006Nyberg}, which has been extended by a robust formulation for non-linear systems~\cite{Esfahani2016} and parameter-varying systems~\cite{VARGA20116697}. These approaches consider a continuous-time model setting, whereas in this work, amongst other contributions to be mentioned hereafter, we focus on a discrete-time model setting and a closed-form parameter-varying solution.}

\textbf{Our contributions:} In summary, there exist many approaches to the problem of fault detection and estimation for linear parameter-varying systems. Yet, there does not yet exist a solution which could guarantee the decoupling of disturbances, while isolating and estimating the fault of interest in real-time (i.e., having a {\em practically implementable} solution in the form of a discrete filter with {\em low computational burden}). As such, we define our contributions as follows:

 \begin{enumerate}[label=(\roman*)]
    \item \textbf{Parameter-varying filter synthesis:} We propose a novel parameter-varying polynomial decomposition for LPV dynamical systems (Lemma~\ref{lem:poly}), which paves the way for a convex reformulation of the isolation/estimation filter at each time instance (Proposition~\ref{theorem:LPVtheorem}).
    
    \item \textbf{Isolability conditions:} 
    We offer the {\em existence} conditions of an isolation filter via a novel polynomial time-varying matrix construction (Lemma~\ref{lem:poly}).
    This allows for a {\em tractable} evaluation of isolability for the LPV systems.
    
    \item \textbf{Analytical solution:} We further propose an arbitrarily accurate approximation for the original program of the filter design whose solution is analytically available (Corollary~\ref{cor:dualproblem}). This analytical solution allows for {\em implementable} real-time filter synthesis while using valuable practical considerations in the context of LPV systems. 
 \end{enumerate}
The LPV estimation filter is demonstrated on the lateral dynamics of an automated vehicle, a popular illustrative example for LPV fault detection/estimation techniques~\cite{Zhou2020,8269396}. Herein, the estimation challenge is to detect an offset in the steering system, while the vehicle can have a time-varying yet measurable longitudinal velocity.} 

The outline of this work is as follows. First, the problem formulation is provided in Section~\ref{sec:problemformulation}. In Section~\ref{sec:theorem}, the design of the LPV estimation filter is provided. Moreover, the problem is considered from a practical perspective, showing that the synthesis of such a estimation filter can be implemented by the use of generic computational tools, e.g., matrix inversion. In Section~\ref{sec:simulation}, the estimation filter is demonstrated by application to an example of the lateral dynamics of an automated vehicle. Finally, Section~\ref{sec:conclusion} draws conclusions and proposes future work. 


 
 
\section{Model description and preliminaries}\label{sec:problemformulation}
In this section, a class of LPV systems is introduced along with some basic definitions. The model is an LPV extension of the differential-algebraic equations (DAE) class of discrete-time models introduced in~\cite{2006Nyberg} and is described as
\begin{align}\label{eq:DAE}
   H(w_k,\shiftop)[x]+L(w_k,\shiftop)[z]+F(w_k,\shiftop)[f]=0,
\end{align}
where $\shiftop$ represents the shift operator (i.e., $\shiftop[x(k)]=x(k+1)$), $x,z,f,w$ represent discrete-time signals indexed by the discrete time counter $k$, taking values in~$\mathbb{R}^{n_{x}},\mathbb{R}^{n_{z}},\mathbb{R}^{n_{f}},\mathbb{R}^{n_{w}}$. The matrices $H(w_k,\shiftop), L(w_k,\shiftop), F(w_k,\shiftop)$ are parameter-varying polynomial functions in the variable $\shiftop$, depending on the parameter signal $w$ with $n_r$ rows and $n_x,\:n_z,\:n_f$ columns, respectively. {\color{black} Finally, $w$ represents a scheduling parameter of which the explicit relationship with time is unknown a priori, but the parameter is measurable in real-time and takes values from a compact set~$\mathcal{W}\subseteq\mathbb{R}^{n_{w}},\forall k$.} 
{\color{black}The signal $z$ is assumed to be known or measurable up to the current time $k$ and consist of, e.g., the known or measurable inputs and outputs to and from the system. The signals $x$ and $f$ are unknown and represent the state of the system and the fault, where $f$ is not restricted to any particular location (e.g., sensor or actuator fault).}
\begin{Rem}[Non-measurable scheduling parameters or model uncertainty]
    Several suggestions exist in literature, in the scope of geometric nullspace-based estimation filters, which can be used in making these filters suitable for {\em non-measurable} scheduling parameters $w_k$~\cite[Section 3.3]{VARGA20116697}. Note, that the proposed approximation methods for unknown parameters $w_k$ are directly applicable in the results from this work. 
\end{Rem}The model~\eqref{eq:DAE} encompasses a large class of parameter-varying dynamical systems, an example of which is a set of LPV state-space difference equations. This example will be used in the simulation study and can be derived from~\eqref{eq:DAE} by starting from the following LPV difference equations:
{\color{black}\begin{align}\label{eq:ldif}
	\begin{cases}
		G(w_k)X(k+1) = A(w_k)X(k)+B_u(w_k)u(k) + B_d(w_k)d(k)+B_f(w_k)f(k),\\
		y(k) = C(w_k)X(k)+D_u(w_k)u(k) + D_d(w_k)d(k)+D_{f}(w_k)f(k).
	\end{cases}
\end{align}}
Herein, $u(k)$ represents the input signal, $d(k)$ the exogenous disturbance, $X(k)$ the internal state, $y(k)$ the measured output and $f(k)$ the fault. By defining $z:=[y;u],~x:=[X;d]$ and the parameter-varying polynomial matrices
{\color{black}\begin{align*}
    L(w_k,\shiftop):=\begin{bmatrix}0&B_u(w_k)\\-I&D_u(w_k)\end{bmatrix},\quad F(w_k,\shiftop):=\begin{bmatrix}B_f(w_k)\\D_f(w_k)\end{bmatrix}, \quad 
    H(w_k,\shiftop):=\begin{bmatrix}-G(w_k)\shiftop+A(w_k) & B_d(w_k)\\C(w_k)&D_d(w_k)\end{bmatrix},
\end{align*}}
in~\eqref{eq:DAE}, it can be observed that~\eqref{eq:ldif} is an example of the model description~\eqref{eq:DAE}.

In the absence of a fault signal $f$, i.e., for $f=0$, all possible $z$-trajectories of the system~\eqref{eq:DAE} can be denoted as
\begin{align}\label{eq:behavior1}
    \mathcal{M}(w)\Let\{z: \Z\ra\R^{n_z}|\:\exists x:\Z\ra\R^{n_x}: H(w_k,\shiftop)[x]+L(w_k,\shiftop)[z]=0\},
\end{align}
which is called the \textit{healthy} behavior of the system. For fault detection, the primary objective is to identify whether the trajectory $z$ belongs to this healthy behavior.

 
\section{Design of parameter-varying estimation filter}\label{sec:theorem}
In~\cite{2006Nyberg}, an LTI system, also known as a residual generator, is proposed via the use of an irreducible  polynomial  basis  for  the  nullspace  of $H(w_k,\shiftop)$, denoted  by $N_H(w,\shiftop)$~\footnote{In the remainder of this work, by not explicitly mentioning the time index ``$k$"
 in $w$, we emphasize that the filter coefficients may depend on the parameter signal $w$ in multiple time instances.}. {\color{black}In this work, we take the problem a step further by finding an irreducible polynomial basis $N_H(w,\shiftop)$ for the nullspace of $H(w_k,\shiftop)$, i.e., the state dynamics of an LPV system. Such a polynomial fully characterizes the healthy behavior of the system~\eqref{eq:DAE} as follows:} 
\begin{align}
	\mathcal{M}(w)\!=\!\{z:\Z\ra\R^{n_z}\:|\:N_H(w,\shiftop)L(w_k,\shiftop)[z]\!=\!0\}.\label{eq:beh23}
\end{align}
For the design of an estimation filter, it suffices to introduce a linear combination $N(w_k,\shiftop)=\mu N_H(w_k,\shiftop)$, such that the following objectives for fault detection can be achieved:  
\begin{subequations}\label{eq:lpvtot}
\begin{align}
        a^{-1}(\shiftop)N(w,\shiftop)H(w_k,\shiftop)=&0,\quad\forall w_k\in\mathcal{W},\label{eq:lpvtota}\\
        a^{-1}(\shiftop)N(w,\shiftop)F(w_k,\shiftop)\neq& 0,\quad\forall w_k\in\mathcal{W}.\label{eq:lpvtotb}
\end{align}
\end{subequations}
Here, the polynomial $a(\shiftop)$ is intended to make the estimation filter proper. Moreover, it enables a form of noise attenuation, which is highly recommended towards experimental applications. The above conditions allow us to find a filter to decouple the residual from the time-varying behavior of the system.
In fulfilling the requirements of~\eqref{eq:lpvtot}, a proper LPV estimation filter of the following form can be created:
\begin{align}\label{eq:resgen}
    r\Let a^{-1}(\shiftop)N(w,\shiftop)L(w_k,\shiftop)[z].
\end{align}
 Note, that the degree of $a(\shiftop)$ is not less than the degree of $N(w_k,\shiftop)L(w_k,\shiftop)$ and is stable and that the design of 
 such polynomial is up to the user and can depend on various criteria (e.g., noise sensitivity). 
 In the following lemma, a method to transform the conditions~\eqref{eq:lpvtot} into non-complex, scalar or vector equations is provided, forming a basis for the methodology proposed in the next section.
{\color{black}\begin{Lem}\label{lem:poly}
    Let $N(w,\shiftop)$ be a feasible solution to~\eqref{eq:lpvtot} where the matrices $H(w_k,\shiftop),\:F(w_k,\shiftop),\:a(\shiftop)$ are as in~\eqref{eq:DAE} and~\eqref{eq:lpvtot} and have a particular form of
    \begin{align*}
         H(w_k,\shiftop)=& \sum_{i=0}^{d_H}H_i(w_k)\shiftop^i,& F(w_k,\shiftop)=& \sum_{i=0}^{d_F}F_i(w_k)\shiftop^i,\\
        N(w,\shiftop) =& \sum_{i=0}^{d_N}N_i(w)\shiftop^i,& a(\shiftop)=&\sum_{i=0}^{d_a}a_i\shiftop^i,
    \end{align*}
    where $d_H,\:d_F,\:d_N,\:d_a$ denote the degree of the respective polynomials. Given any parameter signal $w$, the conditions in~\eqref{eq:lpvtot} can be equivalently rewritten as
    \begin{subequations}\label{eq:conditionstot}
    \begin{align}
        \bar{N}(w)\bar{H}(w)=&0,\label{eq:condition1}\\
        \bar{N}(w)\bar{F}(w)\neq&0,\label{eq:condition2}
    \end{align}
    \end{subequations}
    where $\bar{N}(w),\:\bar{H}(w),\:\bar{F}(w)$ are defined as
    \begin{align*}
        \bar{N}(w)\Let&\begin{bmatrix}N_{0}(w)& N_{1}(w)& \hdots&N_{d_{N}}(w)\end{bmatrix},\\
        \bar{H}(w)\Let&\begin{bmatrix}H_{0}(w_{k-d_a})&\hdots&0\\\vdots&\ddots&\vdots\\H_{d_{H}}(w_{k-d_a})&&H_{0}(w_{k-d_a+d_N})\\\vdots&\ddots&\vdots\\0&\hdots&H_{d_{H}}(w_{k-d_a+d_N})\end{bmatrix}^\intercal,\\
        \bar{F}(w)\Let&\begin{bmatrix}F_{0}(w_{k-d_a})&\hdots&0\\\vdots&\ddots&\vdots\\F_{d_{F}}(w_{k-d_a})&&F_{0}(w_{k-d_a+d_N})\\\vdots&\ddots&\vdots\\0&\hdots&F_{d_{F}}(w_{k-d_a+d_N})\end{bmatrix}^\intercal.
    \end{align*}
\end{Lem}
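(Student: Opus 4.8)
The plan is to peel off the ``operator'' layer in~\eqref{eq:lpvtot} and recognise~\eqref{eq:conditionstot} as the coefficient-wise form of the polynomial identities that remain. Three elementary observations carry the argument. \emph{(O1)} Since $a(\shiftop)$ is stable with $a_{d_a}\neq 0$, the filter $a^{-1}(\shiftop)$ is a well-defined, invertible linear operator on sequences; hence for any matrix-polynomial operator $P(w,\shiftop)$ we have $a^{-1}(\shiftop)\circ P(w,\shiftop)=0$ iff $P(w,\shiftop)=0$, and $a^{-1}(\shiftop)\circ P(w,\shiftop)\neq 0$ iff $P(w,\shiftop)\neq 0$. \emph{(O2)} The shift intertwines with parameter-varying coefficients: $\shiftop^{m}\circ M(w_k)=M(w_{k+m})\circ\shiftop^{m}$ for every integer $m$ and every matrix-valued map $M$. \emph{(O3)} A polynomial operator $\sum_{\ell}C_\ell(w)\,\shiftop^{\ell}$ is the zero operator iff each coefficient block $C_\ell(w)$ vanishes (test on shifted unit pulses to read the blocks off one by one).

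Concretely, I would write $a^{-1}(\shiftop)=\alpha(\shiftop^{-1})\,\shiftop^{-d_a}$, where $\alpha(\shiftop^{-1})$ is the constant-coefficient power-series inverse of $a_{d_a}+a_{d_a-1}\shiftop^{-1}+\dots+a_0\shiftop^{-d_a}$. Applying (O1) to the commuting factor $\alpha(\shiftop^{-1})$, condition~\eqref{eq:lpvtota} becomes $\shiftop^{-d_a}N(w,\shiftop)H(w_k,\shiftop)=0$ and~\eqref{eq:lpvtotb} becomes $\shiftop^{-d_a}N(w,\shiftop)F(w_k,\shiftop)\neq 0$; the residual factor $\shiftop^{-d_a}$ is what fixes the time argument of the scheduling parameter once everything is referred to the running index $k$. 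Expanding the first product by repeated use of (O2) gives
\[
\shiftop^{-d_a}N(w,\shiftop)H(w_k,\shiftop)=\sum_{\ell=0}^{d_N+d_H}\Big(\sum_{i+j=\ell}N_j(w)\,H_i(w_{k-d_a+j})\Big)\shiftop^{\ell-d_a},
\]
with the convention $H_i\equiv 0$ for $i>d_H$ and $N_j\equiv 0$ for $j>d_N$, and similarly with $F$ in place of $H$. By (O3), \eqref{eq:lpvtota} then holds iff $C^H_\ell(w):=\sum_{i+j=\ell}N_j(w)H_i(w_{k-d_a+j})=0$ for all $\ell\in\{0,\dots,d_N+d_H\}$, while~\eqref{eq:lpvtotb} holds iff $C^F_\ell(w):=\sum_{i+j=\ell}N_j(w)F_i(w_{k-d_a+j})\neq 0$ for at least one $\ell$.

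Finally I would match these stacked coefficients to the announced matrices. The block row $\big(C^H_0(w),\dots,C^H_{d_N+d_H}(w)\big)$ equals $\bar N(w)$ times the banded block matrix whose $(j,\ell)$ block is $H_{\ell-j}(w_{k-d_a+j})$ for $0\le\ell-j\le d_H$ and $0$ otherwise; this matrix is exactly $\bar H(w)$ as written, the transpose in its display being merely the device that lays out each ``column'' $H_0(w_{k-d_a+j}),\dots,H_{d_H}(w_{k-d_a+j})$ into the intended staircase pattern. Hence $\{\,C^H_\ell(w)=0\ \forall\ell\,\}\Leftrightarrow \bar N(w)\bar H(w)=0$, i.e.~\eqref{eq:condition1}; the identical argument with $F$ gives $\{\,C^F_\ell(w)\neq 0\text{ for some }\ell\,\}\Leftrightarrow \bar N(w)\bar F(w)\neq 0$, i.e.~\eqref{eq:condition2}. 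This proves both equivalences.

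The main obstacle I anticipate is purely the index bookkeeping in the middle step: commuting $\shiftop^{-d_a}$ and the shifts $\shiftop^{j}$ carried by $N(w,\shiftop)$ past the parameter-varying blocks $H_i(w_k),F_i(w_k)$ --- which do \emph{not} commute with the shift --- and then checking that the time arguments $w_{k-d_a+j}$ that emerge line up column by column with the staircase structure prescribed for $\bar H(w)$ and $\bar F(w)$. Besides that, only two routine remarks are needed: that stability of $a$ makes $a^{-1}(\shiftop)$ well defined and injective (O1), and that evaluation on shifted unit pulses separates the coefficient blocks of a polynomial operator (O3).
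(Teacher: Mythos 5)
Your proposal is correct and follows essentially the same route as the paper's own proof: strip off the invertible part of $a^{-1}(\shiftop)$, retain the $\shiftop^{-d_a}$ shift to fix the time argument at $k$, commute the shifts past the parameter-varying coefficient blocks to obtain the arguments $w_{k-d_a+j}$, and identify the resulting convolution coefficients with the banded products $\bar N(w)\bar H(w)$ and $\bar N(w)\bar F(w)$. The only difference is presentational: you make explicit two facts the paper leaves implicit (injectivity of $a^{-1}(\shiftop)$ and the coefficient-wise vanishing criterion for a polynomial operator), and you treat both conditions symmetrically where the paper proves one and invokes ``the same line of reasoning'' for the other.
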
}
\begin{proof}
For proving the results of this lemma, observe that~\eqref{eq:resgen} can be rewritten as (using~\eqref{eq:DAE} and~\eqref{eq:lpvtot}):
 \begin{align*}
      r & = -a^{-1}(\shiftop)N(w,\shiftop)F(w_k,\shiftop)[f], \quad \Rightarrow -a(\shiftop)[r] = N(w,\shiftop)F(w_k,\shiftop)[f],\\
     \Rightarrow & -\sum_{h=0}^{d_a}a_h\shiftop^h[r] = \sum_{i=0}^{d_N}N_i(w)\shiftop^i\sum_{j=0}^{d_F}F_j(w)\shiftop^j[f] = \sum_{i=0}^{d_N}\sum_{j=0}^{d_F}N_i(w)F_j(\shiftop^i[w])\shiftop^{i+j}[f].
\end{align*}
Multiplication of both sides with $\shiftop^{-d_a}$, in order to time-shift the relation to result in a  present time residual $r(k)$ as a function of previous faults $f$ and residuals $r$, yields 
\begin{align*}
     -\sum_{h=0}^{d_a}a_h\shiftop^{h-d_a}[r]=&\sum_{i=0}^{d_N}\sum_{j=0}^{d_F}N_i(w)F_j(\shiftop^{i-d_a}[w])\shiftop^{i-d_a+j}[f],
     \end{align*}
for which the right-hand side can be rewritten as
\begin{align*}
    \bar{N}(w)\bar{F}(w)\Big(\begin{bmatrix}\shiftop^{-d_a}I&\shiftop^{1-d_a}I&\hdots&\shiftop^{d_N+d_F-d_a}I\end{bmatrix}[f]\Big).
\end{align*}
This proves the equivalence of~\eqref{eq:condition2} and~\eqref{eq:lpvtotb}. The same line of reasoning applies for proving the equivalence of~\eqref{eq:condition1} and~\eqref{eq:lpvtota}.
\end{proof}
It is worth noting that the matrices $\bar{N}(w)$, $\bar{H}(w)$, and $\bar{F}(w)$ defined in Lemma~\ref{lem:poly} depend on the parameter signal $w$ through $d_a + 1$ consecutive values. That is, at time instant $k$ the filter coefficient $\bar{N}(w)$ depends on $\{w_{k-d_a}, \dots, w_{k}\} \in \mathcal W^{d_a + 1}$. We, however, refrain from explicitly denoting this dependency and simply use the notation of the entire trajectory $w$, say $\bar{N}(w)$. In this light and using the result from Lemma~\ref{lem:poly}, the conditions for fault detectability can be defined as follows.
\begin{Fact}[Conditions of isolability]\label{fact:cond}
    Given the parameter signal $w$, there exists a feasible solution $\bar{N}(w)$ to the conditions~\eqref{eq:conditionstot} if and only if
    \begin{align}\label{eq:iffcond}
        \text{Rank}\left(\left[\bar{H}(w)\quad \bar{F}(w)\right]\right)>\text{Rank}\left(\bar{H}(w)\right).
    \end{align}
\end{Fact}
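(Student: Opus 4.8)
The plan is to reduce the assertion to a single fact in linear algebra and then invoke the fundamental relation between the left kernel of a matrix and its column space. By Lemma~\ref{lem:poly}, the existence of a feasible $N(w,\shiftop)$ for~\eqref{eq:lpvtot} is equivalent to the existence of $\bar N(w)$ satisfying~\eqref{eq:conditionstot}; moreover, since $\bar N(w)\bar H(w)=0$ forces every row of $\bar N(w)$ into $\{v:v\bar H(w)=0\}$ while $\bar N(w)\bar F(w)\neq 0$ only requires \emph{one} row of $\bar N(w)$ to witness~\eqref{eq:condition2}, it suffices to decide when there is a single row vector $\bar N(w)$ with $\bar N(w)\bar H(w)=0$ and $\bar N(w)\bar F(w)\neq 0$. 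Phrased set-theoretically, writing $\ker_L M\Let\{v:vM=0\}$ for the left kernel, feasibility is exactly the statement $\ker_L\bar H(w)\not\subseteq\ker_L\bar F(w)$; note that any such $\bar N(w)$ is automatically nonzero, so no separate nondegeneracy hypothesis is needed.

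Next I would convert the inclusion $\ker_L\bar H(w)\subseteq\ker_L\bar F(w)$ into a rank identity. For any matrix $M$ one has $\ker_L M=\mathcal R(M)^\perp$, where $\mathcal R(\cdot)$ denotes the column space; taking orthogonal complements therefore turns $\ker_L\bar H(w)\subseteq\ker_L\bar F(w)$ into $\mathcal R(\bar F(w))\subseteq\mathcal R(\bar H(w))$, which holds if and only if $\mathcal R\big([\bar H(w)\ \ \bar F(w)]\big)=\mathcal R(\bar H(w))$, i.e. $\text{Rank}\big([\bar H(w)\ \ \bar F(w)]\big)=\text{Rank}(\bar H(w))$. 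Negating both sides — and using that appending columns can never lower the rank, so ``$\neq$'' upgrades to ``$>$'' — gives precisely: a feasible $\bar N(w)$ exists if and only if~\eqref{eq:iffcond} holds.

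For the write-up it may be cleaner to spell out the two implications directly rather than through complements. For the ``if'' direction, if $\text{Rank}\big([\bar H(w)\ \ \bar F(w)]\big)>\text{Rank}(\bar H(w))$, choose a column of $\bar F(w)$ outside $\mathcal R(\bar H(w))$; subtracting its orthogonal projection onto $\mathcal R(\bar H(w))$ yields a nonzero vector $v$ orthogonal to every column of $\bar H(w)$ yet not orthogonal to that column of $\bar F(w)$, so $\bar N(w)\Let v^\intercal$ satisfies~\eqref{eq:condition1}--\eqref{eq:condition2}. For the ``only if'' direction, argue the contrapositive: if the two ranks coincide then $\mathcal R(\bar F(w))\subseteq\mathcal R(\bar H(w))$, so $v\bar H(w)=0$ implies $v\bar F(w)=0$, and no $\bar N(w)$ can meet both requirements.

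I do not expect a substantive obstacle here; the only points demanding care are bookkeeping the direction of the inclusion under orthogonal complementation (it is $\mathcal R(\bar F(w))\subseteq\mathcal R(\bar H(w))$, not the reverse) and reading~\eqref{eq:condition2} correctly as non-membership in $\ker_L\bar F(w)$ — equivalently, that $\bar F(w)$ has a column not spanned by the columns of $\bar H(w)$ — rather than as a full-rank or genericity statement.
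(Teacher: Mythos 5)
Your proof is correct. The paper itself gives no argument for Fact~\ref{fact:cond} (it is explicitly omitted as a straightforward adaptation of Fact~4.4 of the cited reference), and your route --- reducing feasibility of~\eqref{eq:conditionstot} to the non-inclusion of the left kernel of $\bar H(w)$ in that of $\bar F(w)$, dualizing to column spaces to get the rank gap, and constructing a witness via the projection residual of a column of $\bar F(w)$ --- is precisely the standard argument that adaptation amounts to, so it fills the omitted step rather than diverging from it.
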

The proof is omitted as it is a straightforward adaption from~\cite[Fact 4.4]{Esfahani2016}. Using the results from Lemma~\ref{lem:poly}, the main theorem for the LPV fault detection filter can be proposed.
\begin{Prop}[Parameter-varying filter synthesis]\label{theorem:LPVtheorem}
    Let the matrices $\bar{H}(w)$ and $\bar{F}(w)$ be matrices as defined in Lemma~\ref{lem:poly}. Then an LPV fault detection filter of the form~\eqref{eq:resgen} can be found at every time instance $k$, depending on $w_k$, that fulfills~\eqref{eq:lpvtot}, by solving the following convex quadratic program (QP):
    \begin{subequations}
    \label{eq:lpvopt}
    \begin{align}
    \label{eq:lpvopt_obj}
        \bar{N}^*(w)\Let\arg&\min\limits_{\bar{N}}-\lVert\bar{N}\bar{F}(w)\rVert_{\infty}+\lVert\bar{N}\tr\rVert_2^2,\\
    \label{eq:lpvopt_cont}
        &\textnormal{s.t.}\quad \bar{N}\bar{H}(w)=0,
    \end{align}
    \end{subequations}
    where $\lVert\cdot\rVert_\infty$ denotes the supremum norm. 
\end{Prop}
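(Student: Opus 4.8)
The plan is to read the quadratic program \eqref{eq:lpvopt} through Lemma~\ref{lem:poly}: a filter of the form \eqref{eq:resgen} satisfying \eqref{eq:lpvtot} exists if and only if \eqref{eq:lpvopt} admits a feasible point at which the first term of the objective is strictly negative, and the proposition then amounts to showing that the minimizer $\bar N^*(w)$ is such a point. Concretely, I would identify the optimization variable $\bar N=[\,N_0(w)\ \cdots\ N_{d_N}(w)\,]$ with the coefficient row of a candidate polynomial $N(w,\shiftop)=\sum_{i=0}^{d_N}N_i(w)\shiftop^i$. By Lemma~\ref{lem:poly}, the decoupling requirement \eqref{eq:lpvtota} is equivalent to the linear equality $\bar N\bar H(w)=0$, i.e. to feasibility in \eqref{eq:lpvopt_cont}, while the fault-sensitivity requirement \eqref{eq:lpvtotb} is equivalent to $\bar N\bar F(w)\neq 0$, i.e. to $\lVert\bar N\bar F(w)\rVert_\infty>0$. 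Hence \eqref{eq:lpvopt} is posed over exactly the admissible filter coefficients, and the only thing to check is that its optimizer makes \eqref{eq:lpvtotb} hold.

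Second, I would settle well-posedness together with the convexity claim. The feasible set of \eqref{eq:lpvopt} is the linear subspace $\{\bar N:\bar N\bar H(w)=0\}$, which is closed and nonempty; on it the objective is coercive, since $-\lVert\bar N\bar F(w)\rVert_\infty+\lVert\bar N\tr\rVert_2^2\ \ge\ -\lVert\bar F(w)\rVert_2\lVert\bar N\rVert_2+\lVert\bar N\rVert_2^2\to\infty$ as $\lVert\bar N\rVert_2\to\infty$, so a minimizer $\bar N^*(w)$ exists. The term $-\lVert\bar N\bar F(w)\rVert_\infty$ is concave, so the program should be understood through the standard finite decomposition: using $\lVert\bar N\bar F(w)\rVert_\infty=\max_j\max_{s\in\{\pm1\}} s\,[\bar N\bar F(w)]_j$, problem \eqref{eq:lpvopt} splits into one equality-constrained convex QP $\min_{\bar N}\{-s\,[\bar N\bar F(w)]_j+\lVert\bar N\tr\rVert_2^2:\ \bar N\bar H(w)=0\}$ per column index $j$ and sign $s\in\{\pm1\}$, each a genuine convex quadratic program whose minimizer solves a linear (KKT) system, and $\bar N^*(w)$ is the best among these finitely many candidates; this is also the entry point for the closed-form synthesis of Corollary~\ref{cor:dualproblem}.

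Third, I would relate nontriviality of $\bar N^*(w)$ to isolability. If the rank condition \eqref{eq:iffcond} of Fact~\ref{fact:cond} fails, then $\bar N\bar H(w)=0$ forces $\bar N\bar F(w)=0$, the objective reduces to $\lVert\bar N\tr\rVert_2^2\ge 0$, and $\bar N^*(w)=0$, consistently with the fact that no admissible filter exists in that case. If \eqref{eq:iffcond} holds, Fact~\ref{fact:cond} supplies a feasible $\bar N_0$ with $\bar N_0\bar F(w)\neq 0$; evaluating the objective along the ray $t\bar N_0$, $t>0$, gives $-t\lVert\bar N_0\bar F(w)\rVert_\infty+t^2\lVert\bar N_0\tr\rVert_2^2$, which is strictly negative for all small $t$, so the optimal value of \eqref{eq:lpvopt} is strictly negative. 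Therefore $\bar N^*(w)\neq 0$ and $\lVert\bar N^*(w)\bar F(w)\rVert_\infty>0$, i.e. $\bar N^*(w)\bar F(w)\neq 0$. Reading $\bar N^*(w)$ back as the coefficients of $N(w,\shiftop)$ and invoking Lemma~\ref{lem:poly} in the reverse direction yields both \eqref{eq:lpvtota} and \eqref{eq:lpvtotb}; choosing $a(\shiftop)$ stable and of degree at least $\deg\big(N(w,\shiftop)L(w_k,\shiftop)\big)$ then makes \eqref{eq:resgen} a proper LPV estimation filter. Finally, since $\bar H(w)$ and $\bar F(w)$ at time $k$ are built from the finite window $\{w_{k-d_a},\dots,w_k\}$, the QP can be formed and solved afresh at each instant $k$, which is the per-instant statement claimed.

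The main obstacle is not a single computation but making the ``convex QP'' wording honest: as written the objective is a difference of convex functions, so the argument must carry the finite maximum-of-convex-QPs decomposition above, or, equivalently, fix in advance which coordinate of $\bar N\bar F(w)$ is pushed to its extreme. A second delicate point is ruling out that the regularizer $\lVert\bar N\tr\rVert_2^2$ suppresses fault sensitivity; this is exactly what the ray argument $t\bar N_0$ handles by forcing a strictly negative optimum. One should also keep track that ``depending on $w_k$'' in the statement really means depending on the short window of past scheduling values noted after Lemma~\ref{lem:poly}.
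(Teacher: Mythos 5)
Your proof is correct, and it is in fact substantially more complete than the paper's own proof, which consists only of a two-sentence interpretive remark (and, incidentally, swaps the roles of \eqref{eq:lpvtota} and \eqref{eq:lpvtotb} when attributing them to the objective term and the constraint --- you have the correspondence the right way around). The three things you add are precisely the gaps the paper leaves open: (i) existence of a minimizer via coercivity of $\lVert\bar{N}\tr\rVert_2^2$ on the feasible subspace; (ii) an honest reading of ``convex QP'' for an objective that is a difference of convex functions, via the finite decomposition of $-\lVert\bar{N}\bar{F}(w)\rVert_\infty$ into a minimum over columns and signs of linear terms, each subproblem being a genuine equality-constrained QP --- this is also exactly the entry point the paper uses later in Corollary~\ref{cor:dualproblem}, where the column index $j^*$ is selected a posteriori; and (iii) the ray argument $t\bar{N}_0$, $t>0$, showing that under the rank condition of Fact~\ref{fact:cond} the optimal value is strictly negative, so the regularizer cannot force $\bar{N}^*(w)\bar{F}(w)=0$ and condition \eqref{eq:lpvtotb} genuinely holds at the optimizer. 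The paper asserts the conclusions of (i) and (iii) without argument and does not address (ii) at all, so your write-up is the one I would keep; the only cosmetic caveat is that the paper states the proposition as a design recipe rather than a sharp existence claim, so your explicit appeal to Fact~\ref{fact:cond} (which the paper only states separately) is what turns the recipe into a theorem with a proof.
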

\begin{proof}
 The term in~\eqref{eq:lpvopt_obj}, related to the fault polynomial $\bar{F}(w)$ ensures a maximised sensitivity for the fault, analogous to~\eqref{eq:lpvtota}, whereas the quadratic (regularization) term related to the filter polynomial $\bar{N}(w)$ ensures that the solution to the problem is bounded. The constraint~\eqref{eq:lpvopt_cont} ensures that the effect of unknown disturbances is decoupled from the residual, analogous to the desired filter requirement in~\eqref{eq:lpvtotb}.
\end{proof}
Proposition~\ref{theorem:LPVtheorem} lays the groundwork for creating a estimation filter for an LPV model with measurable scheduling parameters $w$. At first glance, it can appear to be an unattractive solution to solve an optimization problem at each time-step, in order to obtain filter coefficients for the estimation filter. However, we show in the following corollary that a tractable analytical solution can be derived for this problem.
\begin{Cor}[Analytical solution]\label{cor:dualproblem}
    Consider the convex QP optimization problem in~\eqref{eq:lpvopt}. The solution to this optimization problem has an analytical solution given by the following polynomial:
    \begin{align}\label{eq:analyt}
        &\bar{N}_\gamma^*(w) = \frac{1}{2\gamma}\bar{F}_{j^*}\tr(w)(\gamma^{-1}I+\bar{H}(w)\bar{H}\tr(w))^{-1}, \\ &\text{where} \quad j^* = \arg\max\limits_{j \le d_N} \lvert\bar{N}_\gamma^{*}(w)\bar{F}_{j}(w)\rvert,
        \nonumber 
    \end{align}
    where $j$ denotes the $j$-th column of the matrix $\bar{F}(w)$.
    Moreover, the solution~$\bar{N}_\gamma^*(w)$ in~\eqref{eq:analyt} converges to the optimal filter coefficient~\eqref{eq:lpvopt} as the parameter~$\gamma$ tends to $\infty$. For bounded values of $\gamma$,~\eqref{eq:analyt} provides an approximate solution.
\end{Cor}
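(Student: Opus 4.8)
The plan is to strip~\eqref{eq:lpvopt} of its non-convexity and non-smoothness by a two-stage reduction, to solve the resulting equality-constrained convex quadratic programs (QPs) in closed form by Lagrange multipliers, and finally to recognise~\eqref{eq:analyt} as the exact solution of a quadratic-penalty relaxation whose optima converge to the optimum of~\eqref{eq:lpvopt}. First I would use $\lVert\bar{N}\bar{F}(w)\rVert_\infty=\max_j\lvert\bar{N}\bar{F}_j(w)\rvert$, where $\bar{F}_j(w)$ is the $j$-th column of $\bar{F}(w)$, to write~\eqref{eq:lpvopt} as $\min_{j}\;\min_{\bar{N}\bar{H}(w)=0}\big(-\lvert\bar{N}\bar{F}_j(w)\rvert+\lVert\bar{N}\tr\rVert_2^2\big)$. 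For each fixed $j$ the feasible set $\{\bar{N}:\bar{N}\bar{H}(w)=0\}$ is a subspace and the remaining term is even in $\bar{N}$, so the substitution $\bar{N}\mapsto-\bar{N}$ lets me replace $-\lvert\bar{N}\bar{F}_j(w)\rvert$ by the linear term $-\bar{N}\bar{F}_j(w)$ without changing the optimal value or the optimizers up to sign; each inner problem is then a genuine equality-constrained convex QP.

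Next I would solve these QPs. From the Lagrangian $-\bar{N}\bar{F}_j(w)+\bar{N}\bar{N}\tr+(\bar{N}\bar{H}(w))\lambda$, with $\lambda$ the multiplier vector of the equality constraint, stationarity gives $\bar{N}\tr=\tfrac12\big(\bar{F}_j(w)-\bar{H}(w)\lambda\big)$, and imposing $\bar{H}\tr(w)\bar{N}\tr=0$ yields $\bar{N}_j^{*\intercal}=\tfrac12\big(I-\bar{H}(w)(\bar{H}\tr(w)\bar{H}(w))^{+}\bar{H}\tr(w)\big)\bar{F}_j(w)=\tfrac12\,\Pi_0(w)\bar{F}_j(w)$, where $\Pi_0(w)$ is the orthogonal projector onto $\ker\bar{H}\tr(w)$. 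A one-line computation gives the optimal value of the $j$-th QP as $-\tfrac14\,\bar{F}_j\tr(w)\Pi_0(w)\bar{F}_j(w)=-\tfrac12\lvert\bar{N}_j^{*}\bar{F}_j(w)\rvert$, so the outer minimization selects $j^{*}=\arg\max_j\bar{F}_j\tr(w)\Pi_0(w)\bar{F}_j(w)$. A Cauchy--Schwarz estimate for the semi-inner product $(u,v)\mapsto u\tr\Pi_0(w)v$ shows that this same $j^{*}$ also maximizes $\lvert\bar{N}_{j^{*}}^{*}\bar{F}_j(w)\rvert$ over $j$, which makes it consistent with the implicitly-defined index in~\eqref{eq:analyt} and certifies $\bar{N}_{j^{*}}^{*}$ as a global minimizer of~\eqref{eq:lpvopt}.

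To obtain the analytical formula and the role of $\gamma$, I would relax the hard constraint by adding the penalty $\gamma\lVert\bar{N}\bar{H}(w)\rVert_2^2$ to the objective. After the same $\min_j$ and sign reduction, each relaxed inner problem is an unconstrained strictly convex quadratic in $\bar{N}$ whose unique minimizer obeys $(I+\gamma\bar{H}(w)\bar{H}\tr(w))\bar{N}\tr=\tfrac12\bar{F}_j(w)$, hence $\bar{N}_{\gamma,j}^{*\intercal}=\tfrac12(I+\gamma\bar{H}(w)\bar{H}\tr(w))^{-1}\bar{F}_j(w)=\tfrac1{2\gamma}(\gamma^{-1}I+\bar{H}(w)\bar{H}\tr(w))^{-1}\bar{F}_j(w)$; transposing and optimizing over $j$ exactly as above yields~\eqref{eq:analyt}. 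For the convergence claim I would diagonalize the symmetric positive semidefinite matrix $\bar{H}(w)\bar{H}\tr(w)$: an eigenvalue $\sigma$ contributes the factor $(1+\gamma\sigma)^{-1}$, which tends to $1$ if $\sigma=0$ and to $0$ otherwise, so $(I+\gamma\bar{H}(w)\bar{H}\tr(w))^{-1}\to\Pi_0(w)$ as $\gamma\to\infty$. Since there are finitely many indices $j$, the selected $j^{*}$ eventually stabilizes, so $\bar{N}_\gamma^{*}(w)\to\tfrac12\bar{F}_{j^{*}}\tr(w)\Pi_0(w)$, which is precisely the minimizer of~\eqref{eq:lpvopt} found above; for finite $\gamma$, $\bar{N}_\gamma^{*}(w)$ is by construction the exact optimum of the penalized problem and hence a constraint-relaxed approximation of~\eqref{eq:lpvopt}.

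The main obstacle I anticipate is bookkeeping rather than conceptual depth, and it concentrates in two places. First, one must cleanly justify the linearization of $-\lvert\bar{N}\bar{F}_j(w)\rvert$, the interchange of the two minima, and the self-consistency of the implicitly-defined $j^{*}$. Second, and more delicately, because a nontrivial filter exists (Fact~\ref{fact:cond}) the matrix $\bar{H}(w)\bar{H}\tr(w)$ is singular, so $(\gamma^{-1}I+\bar{H}(w)\bar{H}\tr(w))^{-1}$ itself diverges as $\gamma\to\infty$ while only its product with the $\tfrac1{2\gamma}$ prefactor converges; getting the scaling right so that $I-\bar{H}(w)(\bar{H}\tr(w)\bar{H}(w))^{+}\bar{H}\tr(w)$ and $\lim_{\gamma\to\infty}(I+\gamma\bar{H}(w)\bar{H}\tr(w))^{-1}$ are recognised as one and the same projector $\Pi_0(w)$ is the single point where a normalization slip would be easy to make.
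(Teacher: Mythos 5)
Your proposal is correct and follows essentially the same route as the paper: decompose the $\infty$-norm into per-column QPs, replace the hard constraint by the quadratic penalty $\gamma\lVert\bar{N}\bar{H}(w)\rVert_2^2$, solve the stationarity condition to get $\bar{N}^\intercal=\tfrac12(I+\gamma\bar{H}\bar{H}^\intercal)^{-1}\bar{F}_j$, select $j^*$ by maximal sensitivity, and send $\gamma\to\infty$. In fact you are somewhat more careful than the paper at the two delicate points — you explicitly identify the $\gamma\to\infty$ limit with the projector onto $\ker\bar{H}^\intercal(w)$ via the eigendecomposition, and you verify by Cauchy--Schwarz that the implicitly defined $j^*$ is self-consistent — whereas the paper only argues that the substituted dual objective is maximized as $\gamma\to\infty$.
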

\begin{proof}
    A dual program of~\eqref{eq:lpvopt} can be obtained by penalizing the equality constraint~\eqref{eq:lpvopt_cont} through a quadratic function as
    \begin{align}\label{eq:dual}
        \sup\limits_{\gamma \ge 0} g(\gamma,w) = \lim_{\gamma \to \infty}g(\gamma,w),
    \end{align}
    where $\gamma\in\mathbb{R}_+$ represents the Lagrange multiplier and $g(\gamma)$ represents the dual function defined as 
    \begin{align*}
        g(\gamma,w) \Let\inf_{\bar{N}}\gamma\lVert\bar{N}\bar{H}(w)\rVert_2^2 + \lVert\bar{N}^{\intercal}\rVert_2^2 - \lVert\bar{N}\bar{F}(w)\rVert_{\infty}.   
    \end{align*}
    Note, that the $\infty$-norm related to the fault sensitivity can temporarily be dropped by viewing the problem~\eqref{eq:lpvopt} and its dual problem~\eqref{eq:dual} as a set of $d_N$ different QPs; note that the matrix $\bar{F}$ has $d_N$ columns. Hence, the set of dual functions is denoted as
    \begin{align}\label{eq:dualfunctions}
        \tilde{g}(\gamma,w) = \inf_{\bar{N}}\underbrace{\gamma\lVert\bar{N}\bar{H}(w)\rVert_2^2 + \lVert\bar{N}^\intercal\rVert_2^2 - \bar{N}\bar{F}(w)}_{\mathcal{L}(\bar{N},\gamma)}.
    \end{align}
    The solution to the convex quadratic dual problem can be found by first finding the partial derivative of the above Langrangian as follows:
    \begin{align*}
        \frac{\partial\mathcal{L}(\bar{N}^*_\gamma(w),\gamma)}{\partial\bar{N}}\!=\!2\gamma\bar{N}^*_\gamma(w)\bar{H}(w)\bar{H}^{\tr}\!(w)\!+\!2\bar{N}^*_\gamma(w)\!-\!\bar{F}^{\tr}\!(w).    
    \end{align*}
    Setting this partial derivative to zero, we arrive at
    \begin{align}
    \bar{N}_\gamma^*(w)=\frac{1}{2\gamma}\bar{F}^\intercal(w)(\gamma^{-1}I+\bar{H}(w)\bar{H}^\intercal(w))^{-1},\label{eq:solutionopt}
    \end{align}
    which provides $d_N$ admissible solutions to the problem with dual functions $\tilde{g}(\gamma,w)$~\eqref{eq:dualfunctions}. The optimal solution is found by choosing the column of $\bar{F}$, such that the fault sensitivity of the filter is maximal, i.e.,
    \begin{align*}
        &\bar{N}_\gamma^*(w) = \frac{1}{2\gamma}\bar{F}_{j^*}\tr(w)(\gamma^{-1}I+\bar{H}(w)\bar{H}\tr(w))^{-1}, \\ &\text{where} \quad j^* = \arg\max\limits_{j \le d_N} \lvert\bar{N}_\gamma^{*}(w)\bar{F}_{j}(w)\rvert,
        \nonumber 
    \end{align*}
    which proves equation~\eqref{eq:analyt}. Substituting this solution back into the dual program~\eqref{eq:dual} yields
    \begin{align*}
        \begin{cases}
           \max\limits_{\gamma}&
           -\frac{1}{4\gamma}\bar{F}_{j^*}^\intercal(w)(\gamma^{-1}I+\bar{H}(w)\bar{H}^\intercal(w))^{-1}\bar{F}_{j^*}(w),\\
            \text{s.t.}\quad &\gamma\geq 0.
        \end{cases}    
    \end{align*}
    This quadratic negative (semi-)definite problem reaches its maximum when $\gamma$ tends to infinity, concluding the proof.
\end{proof}
Considerations for choosing $\gamma$ are elaborated further on inside the algorithmic implementation. For the purpose of estimation, we are particularly interested in a unity zero-frequency gain. In the following corollary, it is shown how to incorporate this condition in the filter. 
\begin{Cor}[Zero steady-state]\label{cor:zerosteadystate}
    {\color{black}Given a filter in the form of in~\eqref{eq:resgen}, where the numerator coefficient $\bar{N}^*_\gamma(w)$ is a solution to the program~\eqref{eq:lpvopt} given analytically in~\eqref{eq:analyt}. The steady-state relation of the mapping $(d,f)\mapsto r$, for any disturbance signal $d$ and a constant fault $f$, is given by
    \begin{align*}
        r=-\frac{\bar{N}^*_\gamma(w)\bar{F}(w)\ind_{{d_N}\times{d_F}}}{\sum_{h=0}^{d_a}a_h}f,    
    \end{align*}
    where $\ind_{{d_N}\times{d_F}}$ denotes a matrix of ones of the dimensions ${d_N}\times{d_F}$. }
    
\end{Cor}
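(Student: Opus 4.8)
The plan is to compute the steady-state (zero-frequency) gain of the residual filter~\eqref{eq:resgen} directly, by feeding it a constant fault $f(k)\equiv f$ and a constant disturbance contribution and exploiting the fact that the shift operator $\shiftop$ acts as the identity on constant signals. Concretely, I would start from the relation already derived in the proof of Lemma~\ref{lem:poly}, namely
\begin{align*}
    -\sum_{h=0}^{d_a}a_h\shiftop^{h-d_a}[r]=\bar{N}(w)\bar{F}(w)\Big(\begin{bmatrix}\shiftop^{-d_a}I&\shiftop^{1-d_a}I&\hdots&\shiftop^{d_N+d_F-d_a}I\end{bmatrix}[f]\Big),
\end{align*}
which holds because the disturbance/state contribution $H$ has been annihilated by the constraint $\bar N(w)\bar H(w)=0$ in~\eqref{eq:lpvopt_cont} (equivalently~\eqref{eq:lpvtota}); this is why $d$ does not appear in the final steady-state expression even though the mapping is nominally $(d,f)\mapsto r$.

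Next I would specialize to a constant fault $f(k)\equiv f$ and a steady-state residual $r(k)\equiv r$. Then $\shiftop^m[r]=r$ for every integer $m$, so the left-hand side collapses to $-\big(\sum_{h=0}^{d_a}a_h\big)r$. Similarly $\shiftop^m[f]=f$ for every $m$, so the bracketed vector of shifted copies of $f$ becomes $\ind_{(d_N+d_F+1)\times 1}\otimes f$ (a stacked column of $d_N+d_F+1$ copies of $f$). Multiplying $\bar N(w)\bar F(w)$ — a row vector of length $d_N+d_F+1$ when $f$ is scalar, or the appropriate block matrix in the vector-$f$ case — against this stacked column is exactly the operation of summing all the entries of $\bar N(w)\bar F(w)$ against $f$, which is what the notation $\bar N^*_\gamma(w)\bar F(w)\ind_{d_N\times d_F}f$ records (the matrix of ones $\ind_{d_N\times d_F}$ performs precisely this aggregation over the block structure of $\bar F$). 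Solving the resulting scalar relation $-\big(\sum_{h=0}^{d_a}a_h\big)r=\bar N^*_\gamma(w)\bar F(w)\ind_{d_N\times d_F}f$ for $r$ gives the claimed formula.

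The only genuinely delicate point — and the step I expect to be the main obstacle — is bookkeeping the index ranges and block dimensions so that the "matrix of ones" $\ind_{d_N\times d_F}$ really does implement the sum over shifted fault copies that appears on the right-hand side. One must check that the number of shifted copies ($d_N+d_F+1$ columns, from exponent $-d_a$ up to $d_N+d_F-d_a$) matches the column count of $\bar F(w)$ built in Lemma~\ref{lem:poly}, and that in the multivariable case ($n_f>1$) the Kronecker/block structure lines up so that contracting against the all-ones pattern reproduces $\bar N^*_\gamma(w)\bar F(w)\ind_{d_N\times d_F}$ rather than some permuted variant. I would also note that well-posedness requires $\sum_{h=0}^{d_a}a_h\neq 0$, i.e.\ that $a(\shiftop)$ has no zero at $\shiftop=1$, which is consistent with $a$ being a stable denominator polynomial chosen by the user; assuming this, dividing through is legitimate and the proof concludes.
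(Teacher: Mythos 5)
Your proposal is correct and follows essentially the same route as the paper: the paper simply evaluates the residual relation $r=-a^{-1}(\shiftop)N(w,\shiftop)F(w_k,\shiftop)[f]$ at $\shiftop=1$, which is exactly your observation that the shift operator acts as the identity on constant signals, collapsing $a(\shiftop)$ to $\sum_{h=0}^{d_a}a_h$ and the stacked shifted copies of $f$ to the all-ones contraction of $\bar N(w)\bar F(w)$. Your additional remarks --- that the disturbance drops out because $\bar N(w)\bar H(w)=0$, and that one needs $a(1)=\sum_h a_h\neq 0$ (guaranteed by stability of $a$) --- are correct and slightly more explicit than the paper's proof, but do not constitute a different argument.
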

\begin{proof}
    The model equation~\eqref{eq:DAE}, multiplied with a filter $a^{-1}(\shiftop)N(w,\shiftop)$, satisfying the conditions~\eqref{eq:lpvtot}, can be denoted as
    \begin{align*}
        a^{-1}(\shiftop)N(w_k,\shiftop)L(w_k,\shiftop)[z]=&- a^{-1}(\shiftop)N(w_k,\shiftop)F(w_k,\shiftop)[f],\\
        \Rightarrow r=&- a^{-1}(\shiftop)N(w_k,\shiftop)F(w_k,\shiftop)[f],
    \end{align*}
    where the last line is induced by~\eqref{eq:resgen}. The steady-state behavior of this filter can be found by setting $\shiftop=1$, resulting in
    {\color{black}\begin{align*}
        r=&- a^{-1}(1)N(w_k,1)F(w_k,1)[f] = -\frac{\bar{N}(w)\bar{F}(w)\ind_{{d_N}\times{d_F}}}{\sum_{h=0}^{d_a}a_h}f,
    \end{align*}}
    which provides the desired result, hence concluding the proof.
\end{proof}
 {\color{black}Notice, that when the considered system is time-invariant (i.e., $w$ is constant for all $k$), the conclusion from Corollary~\ref{cor:zerosteadystate} coincides with~\cite[Lemma 3.1, Eq. (10)]{van2020multiple}. This condition, together with the stability of $a(\shiftop)$, ensures convergence of the estimation error for piecewise constant fault signals. Let us note that when there is additive unbiased noise on, e.g., the output measurements, the average behavior of the resulting residual will still follow the residual from the deterministic case.} {\color{black} In the next section, we elaborate on the algorithmic implementation of the proposed estimation filter to, e.g., an LPV minimal state-space realization, a non-trivial problem given the potential effects of dynamic dependence~\cite{toth2012}.

\subsection*{Algorithmic Implementation}
For the estimation filter to function according to the objectives~\eqref{eq:lpvtot} (including unity DC gain for estimation), hence preventing any effects from dynamic dependencies~\cite{toth2012}, the filter can be implemented as an LPV Input-Output representation as follows:
\begin{align*}
    r(k) = a_0^{-1}\bar{a}\ind_{d_a}E(w) \begin{bmatrix}z(k-d_a)&\hdots&z(k-d_a+d_N)\end{bmatrix}^{\intercal} = - a_0^{-1}\sum_{i=1}^{d_a}a_{i}r(k-i)
\end{align*}
\begin{align}
    E(w)=&\frac{\bar{N}(w)\bar{L}(w)}{\bar{N}(w)\bar{F}(w)\ind_{d_N\times d_F}},\label{eq:matrixop}
\end{align}
where the matrix $\bar{L}(w)$ is defined as
\begin{align*}
\bar{L}(w)\Let&\begin{bmatrix}L_{0}(w_{k-d_a})&\hdots&0\\\vdots&\ddots&\vdots\\L_{d_{L}}(w_{k-d_a})&&L_{0}(w_{k-d_a+d_N})\\\vdots&\ddots&\vdots\\0&\hdots&L_{d_{L}}(w_{k-d_a+d_N})\end{bmatrix}^\intercal.
\end{align*}
The matrix operation~$E(w)$ in~\eqref{eq:matrixop} ensures the isolation and estimation of the fault, while the filter coefficients in $\bar{a}$ ensure causality of the operation and reduced sensitivity to noise. By substitution of the results from~\eqref{eq:analyt},~\eqref{eq:matrixop} can be rewritten as:
\begin{align*}
    E(w)=\frac{\bar{F}^{\intercal}_{j^*}(\gamma^{-1}I+\bar{H}(w)\bar{H}^{\intercal}(w)))^{-1}\bar{L}}{\bar{F}^{\intercal}_{j^*}(\gamma^{-1}I+\bar{H}(w)\bar{H}^{\intercal}(w)))^{-1}\bar{F}\ind_{d_N\times d_F}},
\end{align*}
from which it can be deduced that the term $\gamma^{-1}I$ solely ensures well-posedness of the involved inversion operations since, based on Proposition~\ref{cor:dualproblem},  ideally $\gamma^{-1}I$ tends to a zero matrix. The filter is well-posed and exact if and only if $\bar{H}(w)$ is of full rank. If this condition is not fulfilled, the analytical solution from~Proposition~\ref{cor:dualproblem} provides a conservative solution (i.e., the filter could inherit a bias), where the Lagrangian operator $\gamma$ needs to be chosen large enough to ensure well-posedness, while being numerically bounded for practical considerations.}


\section{Case study: automated driving}\label{sec:simulation}
 In this section, the proposed method for designing an LPV estimation filter is illustrated based on a fault estimation problem coming from the lateral dynamics of an automated passenger vehicle. A linear bicycle vehicle model is used as a benchmark model~\cite[Equation (1)]{Schmeitz2017} which is controlled in {\em closed-loop} by the same PD control-law as proposed in~\cite{Schmeitz2017}. Within an automotive context it is undesirable to mitigate a fault in closed-loop without being aware of its magnitude. In fact, in the presence of substantial faults, the vehicle is expected to transition to a safe state. This need for estimating the fault shows the applicability of our proposed problem statement in this application context.
First, the model as depicted in Fig.~\ref{fig:simmodel} can formulated as a set of continuous-time linear state-space equations as follows
{\color{black}    \begin{align}
    \footnotesize
    \setlength{\arraycolsep}{2.5pt}
    \medmuskip = 1mu 
        \dot{X}(t)\!\!
        =&\!\!  \underbrace{\begin{bmatrix}\frac{C_f+C_r}{v_x(t)m}&\frac{l_fC_f-l_rC_r}{v_x(t)m}&0&0\\
    \frac{l_fC_f-l_rC_r}{v_x(t)I}&\frac{l_f^2C_f+l_r^2C_r}{v_x(t)I}&0&0\\-1&0&0&v_x(t)\\0&-1&0&0\end{bmatrix}}_{\tilde{A}(v_x)}\!\!\!X(t)\!-\!\!\!\underbrace{\begin{bmatrix}\frac{C_f}{m}\\\frac{l_fC_f}{I}\\0\\0\end{bmatrix}}_{\tilde{B}_u}\!\!u(t)\nonumber
        -\!\!\underbrace{\begin{bmatrix}\frac{C_f}{m}\\\frac{l_fC_f}{I}\\0\\0\end{bmatrix}}_{\tilde{B}_f}\!\!f(t)\!+\!\!\underbrace{\begin{bmatrix}g&0\\0&0\\0&0\\0&v_x(t)\end{bmatrix}}_{\tilde{B}_d(v_x)}\!\!\!\begin{bmatrix}\sin{(\phi(t))}\\\kappa(t)\end{bmatrix},\\
        y(t)=& \begin{bmatrix}0&I\end{bmatrix}X(t),\nonumber
    \end{align}
where the state $X(t)=\begin{bmatrix}\Ddot{\psi}(t)&\dot{v}_y(t)&\dot{y}_e(t)&\dot{\psi}_e(t)\end{bmatrix}$ for which $\dot{\psi}$ denotes the yaw-rate of the vehicle, $v_y$ the lateral velocity of the vehicle, $y_e$ the lateral deviation from the lane center, and, $\psi_e$ the heading deviation from the lane center. The assumed fault, $f$, acts as an additive fault on the input steering angle, $u$.}  
\begin{figure}[t]
\centering
\includegraphics[clip, trim=4.5cm 13.4cm 4.5cm 7.2cm, scale=0.63]{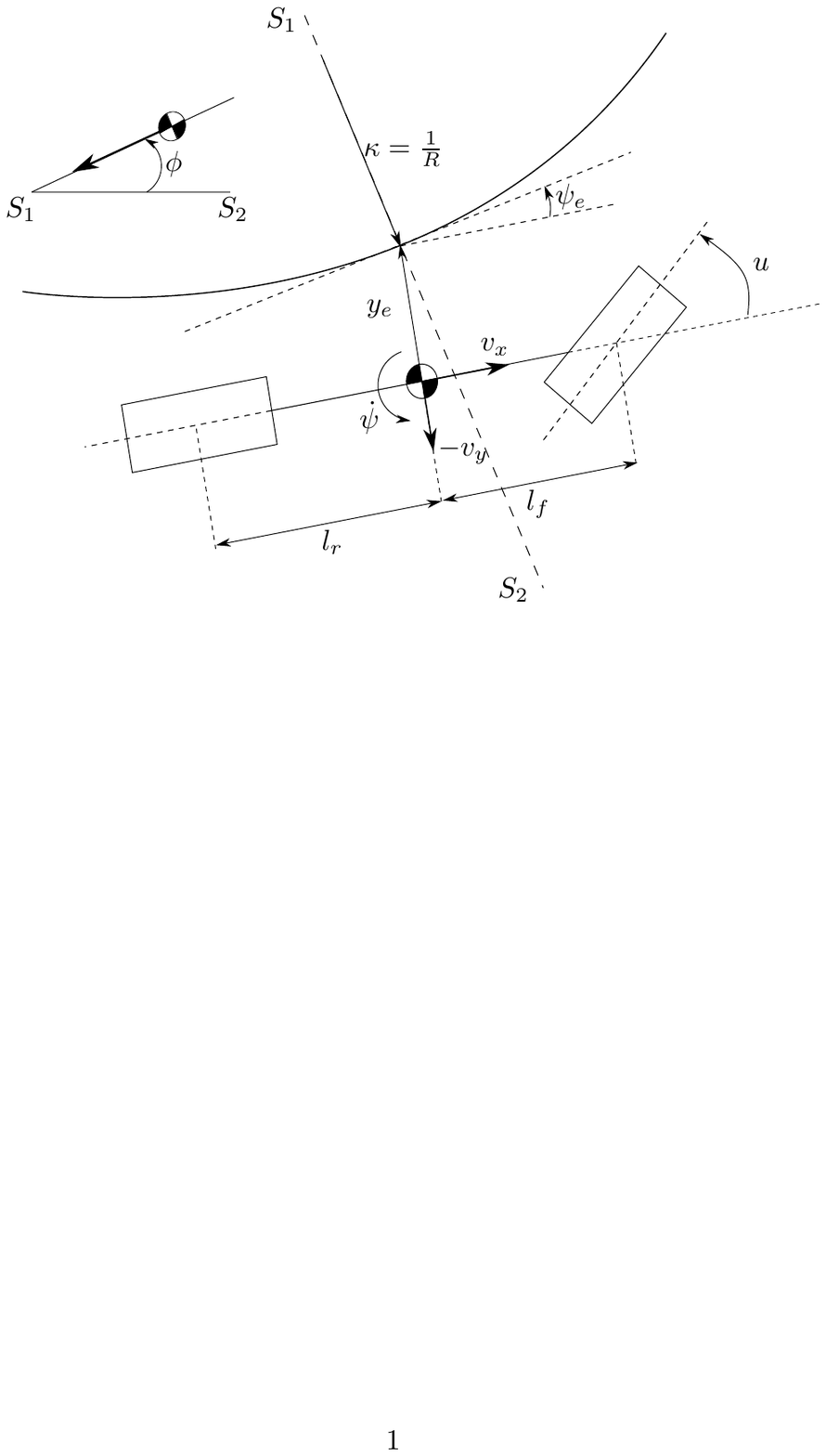}
\caption{Visual representation of the bicycle model.}
\label{fig:simmodel}
\end{figure}
Two disturbances are considered, where $\kappa$ denotes the curvature of the road and $\phi$ denotes the banking angle of the road. The parameters $C_f=1.50\cdot 10^5\:\rm N\cdot rad^{-1}$ and $C_r=1.10\cdot 10^5\:\rm N\cdot rad^{-1}$ represent the lateral cornering stiffness of the front and rear tyres, respectively, $l_f=1.3\:\rm m$ and $l_r=1.7\:\rm m$ represent the distances from the front and rear axle to the center of gravity. It is furthermore assumed that $m=1500\:\rm kg$ represents the total mass of the vehicle, $I=2600\:\rm kg\cdot m^2$ represents the moment of inertia around the vertical axis of the vehicle and $g=9.81\:\rm m\cdot s^{-2}$ represents the gravitational acceleration. The parameter $v_x$ represents the longitudinal velocity of the vehicle and acts as the scheduling parameter $w_k$ from~\eqref{eq:DAE}. {\color{black}The discrete-time system matrices, used for filter synthesis, are found using exact discretization with a sampling time of $h=0.01$s, i.e., $A(v_x)=e^{\tilde{A}(v_x)h}$ and $B(v_x) = \tilde{A}^{-1}(v_x)(A(v_x)-I)\tilde{B}(v_x)$ (for all matrices $\tilde{B}_u, \tilde{B}_f$ and $\tilde{B}_d(v_x)$). The relation from the state to the output remains unchanged through discretization.}

{\color{black}In traffic scenarios it is realistic to assume perturbation in the longitudinal velocity. To capture this, the scheduling parameter is chosen as $v_x(t)=19+5\sin(0.1\pi t)$.} 
{\color{black}The simulation results are generated through Simulink on an Intel Core i7-10850H 2.7 GHz platform. The average computational time needed for evaluating the filter and its output is $8.2\cdot 10^{-5}s$, i.e., a factor 100 lower than the sampling time.} Fig.~\ref{fig:simulation} depicts the simulation results of a 500 sample long scenario. With this simulation, the effectiveness of the LPV estimation filter, using two different sets of filtering coefficients, is shown and compared to an LTI estimation filter (as used in~\cite{van2020multiple}, generated for a velocity $v_x=19\:m/s$). Here, the fault to be estimated is simulated as a realistic abrupt steering wheel offset {\color{black} $f=0.1\frac{\pi}{180}$ radians starting at time sample $k=150$.} Finally, in Fig.~\ref{fig:simulation} the simulation results in two cases, with and without measurement noise are shown. We introduce realistic additive white sensor noises with standard deviations of $\sigma_{\dot{\psi}}=8\cdot 10^{-4}$rad/s, $\sigma_{y_e}=5\cdot10^{-2}$m, $\sigma_{\psi_e}=3\cdot 10^{-3}$ rad. Note, that for the noisy simulations, two different filters are created and depicted. One of which with denominator  $a(\shiftop)=(\shiftop+0.95)^3$. For the other filter (denoted in Fig.~\ref{fig:simulation} as "increased filtering"), $a(\shiftop)=(\shiftop+0.98)^3$ is selected. Note, that the design of $a(\shiftop)$ could highly depend on domain-specific knowledge of the application. For example, $a(\shiftop)$ can be designed to attenuate unmodeled noise or disturbances while respecting the frequency-content of the fault to be estimated.  

\begin{figure}[t]
     \includegraphics[width = 0.8\columnwidth]{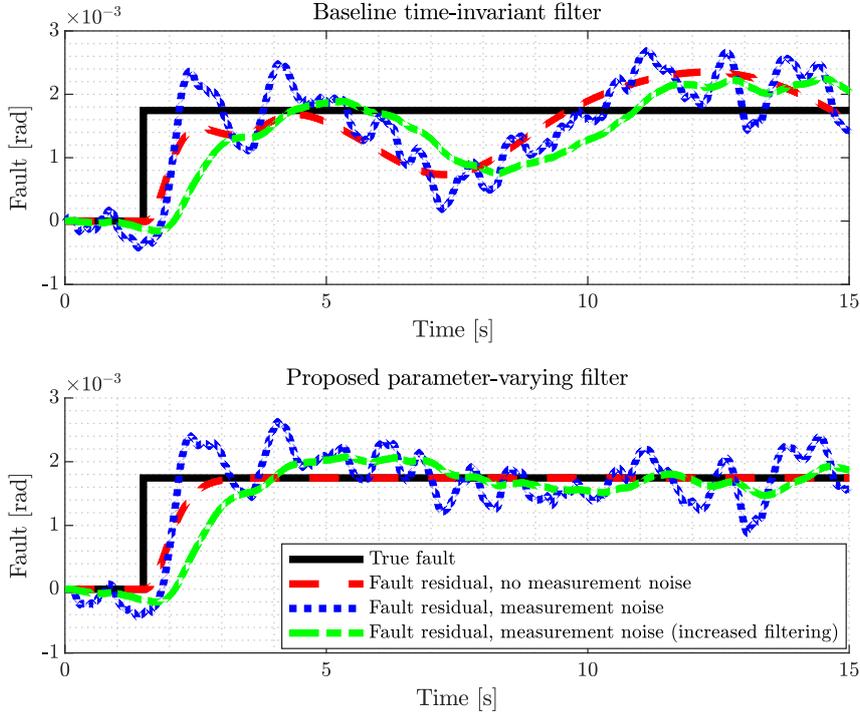}
     \caption{Performance of the time-invariant and the proposed parameter-varying filter in the absence and presence of measurement noise.}
     \label{fig:simulation}
\end{figure}

The results in Fig.~\ref{fig:simulation} show that the baseline LTI filter is not robust against the time-varying longitudinal velocity. Once the fault increases, the residual responds, but does not converge to the true fault. This is explained by the fact that the estimation filter is only designed to decouple unmeasured disturbances from the residual at a constant velocity. Therefore, small effects of disturbances and unmeasured states appear in the residual. In the absence of measurement noise, the LPV filter estimates exactly the injected fault. In the presence of noise, the LPV filter still outperforms the LTI filter, and the estimation accuracy is improved further by placing the poles of numerator $a(\shiftop)$ further towards the exterior of the unit circle. By placing the poles of the numerator $a(\shiftop)$ further towards the origin of the unit-circle, the convergence-rate will increase at the cost of an increased sensitivity for the measurement noise.  


\section{Conclusion and future work}\label{sec:conclusion}
In this paper, a novel synthesis method for a fault estimation filter, applicable a class of discrete-time LPV systems is introduced. The synthesis of such a filter is formulated by an optimization problem as a function of the measurable scheduling parameters, for which a solution exists given a set of (easy-to-check) conditions. We further propose an approximate scheme that can be arbitrarily precise while it enjoys an analytical solution, which supports real-time implementation. The proposed method has been demonstrated on the lateral dynamics of an automated vehicle, showing that in several distinct cases the fault can be estimated. Future work includes the extension to uncertain dynamics, decoupling of non-linearities in the system dynamics and experimental validation of the proposed algorithm.

\bibliographystyle{unsrt}
\bibliography{library/FDI}

\end{document}